\documentclass[a4paper,12pt]{amsart}

\usepackage{amsfonts,amsmath,hyperref,color}

\parindent=0pt
\parskip 0.1in
\setlength{\oddsidemargin}{.5cm}
\setlength{\evensidemargin}{.5cm}
\setlength{\textwidth}{14cm}
\setlength{\topmargin}{-1.5cm}
\setlength{\textheight}{22.5cm}

\theoremstyle{proclaim}
\newtheorem{theorem}{Theorem}[section]
\newtheorem{lemma}[theorem]{Lemma}

\theoremstyle{statement}

\begin{document}

\title{An estimate for narrow operators on $L^p([0, 1])$}

\author{Eugene Shargorodsky {\protect \and} Teo Sharia}
\address{E. Shargorodsky \\
Department of Mathematics\\
King's College London\\
Strand, London WC2R 2LS\\
United Kingdom\quad
and\quad
Technische Universit\"at Dresden\\
Fakult\"at Mathematik\\
01062 Dresden\\
Germany}
\email{eugene.shargorodsky@kcl.ac.uk}

\address{T. Sharia\\
Department of Mathematics\\
Royal Holloway\\
University of London\\
Egham, Surrey TW20 0EX\\
United Kingdom}
\email{t.sharia@rhul.ac.uk}

\date{}

\begin{abstract}
We prove a theorem, which generalises C. Franchetti's estimate for the norm of a projection onto a rich subspace of
$L^p([0, 1])$ and the authors' related estimate for compact operators on $L^p([0, 1])$, $1 \le p < \infty$.
\end{abstract}

\subjclass[2000]{47A30, 47B07, 47B38, 46E30}

\maketitle

\section{Introduction}

For Banach spaces $X$ and $Y$, let $\mathcal{B}(X, Y)$ and $\mathcal{K}(X, Y)$ denote the sets of bounded linear and compact linear operators 
from $X$ to $Y$, respectively; $\mathcal{B}(X) := \mathcal{B}(X, X)$, $\mathcal{K}(X) := \mathcal{K}(X, X)$; $I \in \mathcal{B}(X)$ denotes the identity operator.
An operator $P \in \mathcal{B}(X)$ is called a \textit{projection} if $P^2 = P$. A closed linear subspace $X_0 \subset X$ is called \textit{1-complemented} (in $X$) if there exists a
projection $P \in \mathcal{B}(X)$ such that $P(X) = X_0$ and $\|P\| = 1$.

Let $(\Omega, \Sigma, \mu)$ be a nonatomic measure space with $0 < \mu(\Omega) < \infty$. We will use the following
notation:
\begin{itemize}
 \item $\Sigma^+ :=\{A \in \Sigma : \ \mu(A) > 0\}$,
  \item $\mathbb{I}_A$ is the indicator function of $A \in \Sigma$, i.e. $\mathbb{I}_A(\omega) = 1$ if $\omega \in A$ and 
$\mathbb{I}_A(\omega) = 0$ if $\omega \not\in A$,
  \item $\mathbf{1} := \mathbb{I}_\Omega$,
  \item $\mathbf{ E}f := \left(\frac{1}{\mu(\Omega)} \int_\Omega f\, d\mu\right) \mathbf{1}$.
\end{itemize}

We will use the terminology from \cite{PR13}. 
A $\Sigma$-measurable function $g$ is called a \textit{sign} if it takes values in the set $\{-1, 0, 1\}$, and a \textit{sign on} $A \in \Sigma$ if it is
a sign with the support equal to $A$, i.e. if $g^2 = \mathbb{I}_A$. A sign is of \textit{mean zero} if $\int_\Omega g\, d\mu = 0$. 

An operator $T \in \mathcal{B}(L^p(\mu), Y)$, $1 \le p < \infty$ is called \textit{narrow} if for every $A \in \Sigma^+$ and every $\varepsilon > 0$,
there exists a mean zero sign $g$ on $A$ such that $\|Tg\| < \varepsilon$.

Every $T \in \mathcal{K}(L^p(\mu), Y)$ is narrow (see \cite[Proposition 2.1]{PR13}), but there are noncompact narrow operators. 
Indeed, let $\mathcal{G}$ be a sub-$\sigma$-algebra of $\Sigma$
such that there exists a random variable $\xi$ on $\left(\Omega, \Sigma, \frac{1}{\mu(\Omega)}\,\mu\right)$, 
which is independent of  $\mathcal{G}$ and has a nontrivial Gaussian distribution. Then 
the corresponding conditional expectation operator $\mathbf{E}^\mathcal{G} = \mathbf{ E}(\cdot | \mathcal{G}) \in \mathcal{B}(L^p(\mu))$ is narrow
(see \cite[Corollary 4.25]{PR13}), but not compact if $\mathcal{G}$ has infinitely many pairwise disjoint elements of positive measure.

Let
\begin{equation}\label{Cp}
C_p := \max_{0 \le \alpha \le1}  
 \left(\alpha^{p - 1} + (1 - \alpha)^{p - 1}\right)^{\frac1p} \left(\alpha^{\frac1{p - 1}} + (1 - \alpha)^{\frac1{p - 1}}\right)^{1 - \frac1p} 
\end{equation}
for $ 1 < p < \infty$, and $C_1 := 2$.

In the following theorems, $(\Omega, \Sigma, \mu)= ([0, 1], \mathcal{L}, \lambda)$, where $\lambda$ is the standard Lebesgue measure on $[0, 1]$
and $\mathcal{L}$ is the $\sigma$-algebra of Lebesgue measurable subsets of $[0, 1]$. 

Our starting point is a result due to C. Franchetti. 
\begin{theorem}[\cite{F90}, \cite{F92}]\label{Fran}
Let $P \in \mathcal{B}(L^p([0, 1]))\setminus\{0\}$ be a narrow projection operator, $1 \le p < \infty$. Then
\begin{equation}\label{P}
\|I - P\|_{L^p \to L^p} \ge \|I - \mathbf{ E}\|_{L^p \to L^p} = C_p .
\end{equation}
\end{theorem}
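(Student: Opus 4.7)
The plan is to prove separately the equality $\|I - \mathbf{E}\|_{L^p \to L^p} = C_p$ and the lower bound $\|I - P\|_{L^p \to L^p} \ge C_p$.

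For the equality I would first show that the supremum of $\|(I - \mathbf{E})f\|_p/\|f\|_p$ is attained by two-valued functions $f = a\mathbb{I}_A + b\mathbb{I}_{A^c}$: since $(I-\mathbf{E})f$ depends on $f$ only through $f-\mathbf{E}f$ and the relevant norms are invariant under measure-preserving rearrangements of $[0,1]$, the extremiser reduces to a two-step function. For such an $f$ one has $(I - \mathbf{E})f = (a-b)[(1-\alpha)\mathbb{I}_A - \alpha\mathbb{I}_{A^c}]$ with $\alpha = \lambda(A)$, and a routine Lagrange-multiplier calculation in $(a,b)$ shows the ratio is maximised when $b/a = -(\alpha/(1-\alpha))^{1/(p-1)}$, producing exactly the function of $\alpha$ displayed in (1.1); taking the supremum over $\alpha$ gives $C_p$.

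For the lower bound, the main step is to use narrowness to construct, for every dyadic $\alpha \in (0,1)$ and every $\varepsilon > 0$, a measurable set $A \subset [0,1]$ with $\lambda(A) = \alpha$ and $\|P\mathbb{I}_A - \alpha P\mathbf{1}\|_p < \varepsilon$. This is obtained by iterated bisection: the narrowness of $P$ produces, at each stage, a mean-zero sign $g = \mathbb{I}_{B_+} - \mathbb{I}_{B_-}$ on the current cell $B$ with $\|Pg\|_p$ as small as desired; the identity $P\mathbb{I}_{B_{\pm}} = \tfrac{1}{2}P\mathbb{I}_B \pm \tfrac{1}{2}Pg$ then splits $P\mathbb{I}_B$ in half with controlled error, and the cumulative error is kept bounded by using a geometrically decaying tolerance at each level of the bisection. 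With such an $A$ in hand, I would take the test function $f = \mathbb{I}_A - s\mathbb{I}_{A^c}$ with $s = (\alpha/(1-\alpha))^{1/(p-1)}$ (the optimal ratio from the first paragraph) and compute
\begin{equation*}
Pf \;=\; (1+s)\,P\mathbb{I}_A - s\,P\mathbf{1} \;\approx\; \bigl((1+s)\alpha - s\bigr)P\mathbf{1} \;=\; (\mathbf{E}f)\cdot P\mathbf{1},
\end{equation*}
whence $(I - P)f \approx (I - \mathbf{E})f + (\mathbf{E}f)\,(I - P)\mathbf{1}$, and the first summand already has $L^p$-norm $(C_p - o(1))\|f\|_p$ by the optimisation in the first paragraph.

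The main obstacle is to control the extra term $(\mathbf{E}f)\,(I - P)\mathbf{1}$ when $\mathbf{1}\notin\operatorname{range}(P)$. If $P\mathbf{1} = \mathbf{1}$, the term vanishes and the bound follows at once. Otherwise $(I - P)\mathbf{1}$ is a fixed nonzero element of $\ker P$, and one must exploit the remaining freedom in the choice of $A$ — many admissible $A$'s of measure $\alpha$ are produced by the bisection — to arrange that $A$ is aligned with the super-level sets of $P\mathbf{1}$, so that testing $(I - P)f$ against the $L^{p'}$-dual extremiser $\psi$ of $(I - \mathbf{E})f$ makes the cross pairing $(\mathbf{E}f)\langle (I - P)\mathbf{1},\psi\rangle$ non-negative (or $\varepsilon$-negligible in the limit). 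This alignment step is the subtle point where the projection identity $P^2 = P$ enters essentially, beyond what narrowness alone supplies.
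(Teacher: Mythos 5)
Your bisection construction of sets $A$ with $\lambda(A)=\alpha$ and $\|P\mathbb{I}_A-\alpha P\mathbf{1}\|_{L^p}<\varepsilon$ is sound, and so is the algebra reducing everything to the cross term: with $f=\mathbb{I}_A-s\mathbb{I}_{A^c}$ and $s=(\alpha/(1-\alpha))^{1/(p-1)}$ one gets $(I-P)f\approx (I-\mathbf{E})f+c\,w$, where $c=\alpha-s(1-\alpha)$ and $w:=(I-P)\mathbf{1}$. But your final ``alignment'' step is not an argument, and that is exactly where the proof fails. Note first that $c=0$ forces $s=\alpha/(1-\alpha)$, which coincides with the optimal $s$ only when $p=2$ or $\alpha=1/2$; since the expression in \eqref{Cp} equals $1$ at $\alpha=1/2$ while $C_p>1$ for $p\neq 2$, the maximising $\alpha$ is not $1/2$, so $c\neq 0$ and the cross term is a first-order quantity, not an error term. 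Narrowness gives you no control over how the mean-zero signs it supplies correlate with the fixed function $w$: in the worst case every cell produced at every level of the bisection carries exactly its proportional share of $\int_0^1 w\,d\lambda$, and then \emph{every} admissible $A$ satisfies $\int_A w\,d\lambda=\alpha\int_0^1 w\,d\lambda$, so that pairing against the dual extremiser $\psi\propto(1-\alpha)^{p-1}\mathbb{I}_A-\alpha^{p-1}\mathbb{I}_{A^c}$ yields
\begin{equation*}
c\,\langle w,\psi\rangle \;\propto\; c\,\alpha(1-\alpha)\left[(1-\alpha)^{p-2}-\alpha^{p-2}\right]\int_0^1 w\,d\lambda .
\end{equation*}
A short case check shows $c\left[(1-\alpha)^{p-2}-\alpha^{p-2}\right]<0$ for all $\alpha\neq 1/2$, $p\neq 2$, and replacing $A$ by $A^c$ leaves this product unchanged (both factors flip sign). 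Hence the cross term is strictly negative whenever $\int_0^1 P\mathbf{1}\,d\lambda<1$, of a size you cannot make small, and no choice available inside your construction repairs it. Observing that $P^2=P$ ``must enter essentially'' at this point is a correct diagnosis, not a proof: nowhere in the proposal is the projection identity actually used.

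For comparison: the paper does not prove this theorem directly (it is quoted from Franchetti); within the paper it is recovered as an immediate corollary of Theorem \ref{gammaT}, and there the projection property is used in a completely different and trivially robust way --- since $P\neq 0$ is a projection, any norm-one $u$ in the range of $P$ satisfies $(I-P)u=0$, so the infimum term in \eqref{gamma1} vanishes and $\|I-P\|\ge C_p$ follows. The difficulty caused by non-compactness is absorbed not by explicit test functions but by Lemma \ref{one}: a narrow operator restricts to a compact operator on a suitable 1-complemented subspace $X_0$ isometrically isomorphic to $L^p([0,1])$ and containing the relevant simple functions, after which the compact-operator estimate (Theorem \ref{compT}) applies on $X_0$. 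If you want a self-contained direct proof, you must use $P^2=P$ concretely, e.g.\ via the elementary inequality $\|I-P\|\ge \|(I-P)v\|_{L^p}/\operatorname{dist}_{L^p}(v,P(L^p))$, which is how the projection structure enters Franchetti's original argument. A secondary weakness: your first step, the equality $\|I-\mathbf{E}\|_{L^p\to L^p}=C_p$, is asserted rather than proved, since rearrangement invariance alone does not reduce the extremal problem to two-valued functions.
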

The following theorem was proved in \cite{SS}, where it was used to show that $C_p$ is the optimal constant in 
the bounded compact approximation property of $L^p([0, 1])$.  It implies the inequality in \eqref{P} in the case when $P \not= 0$ is a finite-rank projection. 
\begin{theorem}[\cite{SS}]\label{compT}
Let $1 \le p < \infty$, $\gamma \in \mathbb{C}$, and let $T \in \mathcal{K}(L^p([0, 1]))$. Then 
\begin{equation}\label{gammag}
\|I - T\|_{L^p \to L^p} + \inf_{\|u\|_{L^p} = 1} \|(\gamma I - T)u\|_{L^p} \ge \|I -  \gamma\mathbf{ E}\|_{L^p \to L^p} .
\end{equation}
In particular,
\begin{equation}\label{gamma1}
\|I - T\|_{L^p \to L^p} + \inf_{\|u\|_{L^p} = 1} \|(I - T)u\|_{L^p} \ge  C_p .
\end{equation}
\end{theorem}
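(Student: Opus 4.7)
I would use two inputs. First, since $T \in \mathcal{K}(L^p([0,1]))$, it is narrow by \cite[Proposition 2.1]{PR13}: for every measurable $A$ and every $\delta > 0$ there is a mean-zero sign $g$ on $A$ with $\|Tg\|_{L^p} < \delta$. Second, fix $\varepsilon > 0$ and set $m := \inf_{\|v\|_{L^p}=1}\|(\gamma I - T)v\|_{L^p}$; choose a near-minimiser $u$ with $\|u\|_{L^p} = 1$ and $Tu = \gamma u - e$ where $\|e\|_{L^p} \le m + \varepsilon$. The goal is then to produce a unit test function $h$ for which $(I - T)h$ is close (in $L^p$) to $(I - \gamma\mathbf{E})h$, and for which $\|(I - \gamma\mathbf{E})h\|_{L^p}$ is close to $\|I - \gamma\mathbf{E}\|_{L^p \to L^p}$; combining these via $\|I - T\|_{L^p \to L^p} \ge \|(I - T)h\|_{L^p}/\|h\|_{L^p}$ and sending $\varepsilon,\delta \to 0$ would yield \eqref{gammag}.

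\textbf{Construction.} The natural ansatz is $h = \lambda u + \mu g$ with scalar parameters $\lambda,\mu$ and a mean-zero sign $g$ supplied by narrowness on a carefully chosen set $A$. A direct computation using $Tu = \gamma u - e$ and $\mathbf{E}g = 0$ gives
$$(I - T)h - (I - \gamma\mathbf{E})h \;=\; -\gamma\lambda\,(u - \mathbf{E}u) \;+\; \lambda e \;-\; \mu Tg,$$
whose last two terms are controllable: their $L^p$-norms are bounded by $|\lambda|(m+\varepsilon) + |\mu|\delta$. Optimising over $|A|$, over the splitting of $A$ into its positive and negative halves, and over $\lambda,\mu$, one then estimates $\|(I - \gamma\mathbf{E})h\|_{L^p}/\|h\|_{L^p}$ from below using the fact that the near-extremisers of $\|I - \gamma\mathbf{E}\|_{L^p \to L^p}$ on $L^p([0,1])$ are two-valued step functions $a\mathbb{I}_A + b\mathbb{I}_{A^c}$ (this structure is already visible in the formula \eqref{Cp} defining $C_p$ via a one-parameter optimisation over $\alpha = |A|$).

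\textbf{Main obstacle.} The real stumbling block is the first term $-\gamma\lambda(u - \mathbf{E}u)$, which is \emph{not} automatically small: the approximate eigenvector $u$ at $\gamma$ is in general not a multiple of $\mathbf{1}$, while the extremiser of $\|I - \gamma\mathbf{E}\|$ naturally couples the constant direction $\mathbf{1}$ with a two-valued mean-zero fluctuation. I would try to resolve this either by (i) iterating narrowness on the level sets $B_j$ of $u$ to assemble a refined test function $h = \sum_j (\lambda_j u + \mu_j g_j)\mathbb{I}_{B_j}$ on which $u$ is nearly constant on each $B_j$, so that the previous calculation applies piecewise and the outer sum is controlled via a Minkowski-type inequality; or (ii) using the rearrangement invariance of $L^p$ to recouple $g$ to the distribution of $u$, effectively matching $(u, g)$ to the joint structure $(\mathbf{1},$ sign$)$ realised by a two-valued extremiser. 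The technical heart will be the bookkeeping of the cumulative errors coming from narrowness and from the approximate-eigenvector defect $e$, combined with an exact normalisation $\|h\|_{L^p} = 1$ so that one can divide through and take $\varepsilon,\delta \to 0$. The boundary case $p = 1$ is likely to require a separate argument through the adjoint $T^*$ acting on $L^\infty([0,1])$, since weak compactness of the unit ball is lost and the near-minimiser $u$ cannot be extracted by the usual reflexivity argument.
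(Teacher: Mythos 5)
Your proposal is a plan rather than a proof: the step you yourself label the ``main obstacle'' is precisely where the whole difficulty lives, and neither of your two suggested fixes is carried out or would work as written. Strategy (i) has a concrete flaw: $T$ does not commute with multiplication by $\mathbb{I}_{B_j}$, so once you localise to level sets of $u$ you lose all control of $T(u\mathbb{I}_{B_j})$ --- the approximate-eigenvector identity $Tu=\gamma u-e$ is global and is destroyed by truncation. Hence ``the previous calculation applies piecewise'' is false unless all $\lambda_j$ coincide, and in that case you are back to $h=\lambda u+w$ with $w$ a combination of mean-zero signs, i.e.\ back to the obstacle. Strategy (ii) names the right goal (matching the joint structure of $(u,g)$ to that of $(\mathbf{1},\text{sign})$) but offers no mechanism. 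In addition, your framing is off target: there is no reason to force $(I-T)h$ to be close to $(I-\gamma\mathbf{E})h$ for an $h$ that nearly extremises $I-\gamma\mathbf{E}$; all that is needed is that the ratio $\|(I-T)h\|_{L^p}/\|h\|_{L^p}$ nearly reaches $\|I-\gamma\mathbf{E}\|_{L^p\to L^p}$. The term $-\gamma\lambda(u-\mathbf{E}u)$ is an artifact of comparing the wrong objects, though the mismatch it signals is genuine. Finally, the claim that near-extremisers of $\|I-\gamma\mathbf{E}\|$ are two-valued step functions is asserted, not proved; it does not follow from \eqref{Cp} (that formula concerns $\gamma=1$, and even there the equality $\|I-\mathbf{E}\|=C_p$ is Franchetti's theorem, not a definition), and for complex $\gamma$ it is a separate lemma you would have to establish.

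The idea you are missing is to use compactness itself rather than only its consequence, narrowness. A compact operator maps weakly null sequences to norm null ones, so for \emph{any} fixed $\varphi\in L^p([0,1])$ and any uniformly bounded, rapidly oscillating sequence $v_n$ one has $\|T(\varphi v_n)\|_{L^p}\to 0$ --- products, not just signs. This dissolves your obstacle: take the perturbation proportional to $u$ pointwise, $w_n:=uv_n$, where $v_n$ is two-valued with values $s(1-\alpha)$ and $-s\alpha$ on sets chosen (via a Liapounoff-type exact splitting of a fine partition, which also forces $uv_n\rightharpoonup 0$) to carry fractions $\alpha$ and $1-\alpha$ of the probability measure $|u|^p\,d\lambda$. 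Then $h_n:=u(\lambda+v_n)$ satisfies $(I-T)h_n=u\bigl(\lambda(1-\gamma)+v_n\bigr)+\lambda e-T(uv_n)$, and both $\|h_n\|_{L^p}^p$ and $\|u(\lambda(1-\gamma)+v_n)\|_{L^p}^p$ converge to the two-point expressions $\alpha|\lambda+s(1-\alpha)|^p+(1-\alpha)|\lambda-s\alpha|^p$ and $\alpha|\lambda(1-\gamma)+s(1-\alpha)|^p+(1-\alpha)|\lambda(1-\gamma)-s\alpha|^p$, so that optimising over $(\alpha,s,\lambda)$ yields \eqref{gammag} --- modulo the two-valued-extremiser lemma identifying this optimum with $\|I-\gamma\mathbf{E}\|_{L^p\to L^p}$. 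Note also that your worry about $p=1$ is spurious: a near-minimiser $u$ with $\|(\gamma I-T)u\|\le m+\varepsilon$ exists by the definition of infimum, with no reflexivity or weak compactness involved, and the weak-null argument above works verbatim in $L^1$. For comparison: the paper under review does not prove this theorem at all (it quotes it from \cite{SS}); its own contribution is the narrow case, Theorem \ref{gammaT}, obtained by reducing to the compact case via $1$-complemented subspaces on which the narrow operator restricts to a compact one. Your narrowness-only route, were it completed, would prove the stronger Theorem \ref{gammaT} directly --- which is a measure of how much work your strategies (i) and (ii) still conceal.
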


The following theorem is the main result of the paper. It generalises both Theorems \ref{Fran} and \ref{compT}.
\begin{theorem}\label{gammaT}
Estimates \eqref{gammag} and \eqref{gamma1} hold for all narrow operators $T \in \mathcal{B}(L^p([0, 1]))$, $1 \le p < \infty$.
\end{theorem}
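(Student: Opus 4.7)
My approach is to revisit the proof of Theorem~\ref{compT} from \cite{SS} and observe that its reliance on the compactness of $T$ should be only through the property, guaranteed by \cite[Proposition 2.1]{PR13}, that compact operators on $L^p([0,1])$ are narrow. Since narrowness is taken here as the hypothesis, the same scheme should carry over, with each appeal to compactness replaced by a direct invocation of the defining property of a narrow operator.

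To set this up, I would fix $\eps > 0$ and choose $\alpha \in [0,1]$ together with coefficients $\beta_1, \beta_2 \in \C$ such that the two-valued unit vector
\[
u_0 := \beta_1 \mathbb{I}_A + \beta_2 \mathbb{I}_{[0,1] \setminus A}, \qquad \lambda(A) = \alpha,
\]
$\eps$-realises $\|I - \gamma \mathbf{E}\|_{L^p \to L^p}$; the reduction to such step functions is legitimate by the convexity/rearrangement argument underlying formula \eqref{Cp}. The goal is then to produce, for each $\eps > 0$, a unit vector $u_\eps \in L^p([0,1])$ modelled on $u_0$ for which $(I-T)u_\eps$ closely resembles $(I - \gamma \mathbf{E}) u_0$, modulo a remainder that can be controlled by $\inf_{\|u\|_{L^p}=1} \|(\gamma I - T) u\|_{L^p}$.

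The construction of $u_\eps$ would proceed by iterating the narrow property. Starting from $[0,1]$, I repeatedly split each piece $B$ of the current partition by choosing a mean zero sign $g_B$ on $B$ with $\|Tg_B\|_{L^p}$ as small as desired --- say below $\eps\,\mu(B)^2$ --- which is permissible by narrowness. A straightforward induction then yields a dyadic partition $\{B_i\}$ with $\|T\mathbb{I}_{B_i} - \mu(B_i)\, T\mathbf{1}\|_{L^p}$ of order $\eps\,\mu(B_i)$, so that $T$ acts on step functions adapted to this partition essentially as $u \mapsto (\mathbf{ E} u)\, T\mathbf{1}$. Aggregating a sub-collection of the $B_i$ into a set $A_\eps$ with $\lambda(A_\eps)$ within $O(\eps)$ of the target $\alpha$, and setting $u_\eps := \beta_1 \mathbb{I}_{A_\eps} + \beta_2 \mathbb{I}_{[0,1]\setminus A_\eps}$, one obtains the decomposition
\[
(I - T)u_\eps \;\approx\; (I - \gamma \mathbf{ E}) u_\eps \;+\; (\mathbf{ E} u_\eps)(\gamma \mathbf{1} - T\mathbf{1}).
\]
Taking $L^p$-norms, invoking the near-extremality of $u_\eps$ for $I - \gamma \mathbf{ E}$, and relating the correction term to the infimum in \eqref{gammag} via suitable test vectors built from $\mathbf{1}$ should assemble \eqref{gammag} in the limit $\eps \to 0$; estimate \eqref{gamma1} then follows by specialising to $\gamma = 1$.

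The main obstacle, precisely as in the compact case of \cite{SS}, is the joint bookkeeping in the iteration: the tolerances must shrink fast enough that the error accumulated when summing over the pieces of $A_\eps$ remains $O(\eps)$, while simultaneously $\lambda(A_\eps)$ must be matched to the variationally optimal $\alpha$ dictated by \eqref{Cp}. A further delicate point is the direction of the comparison with the infimum in \eqref{gammag}: the pair $(\beta_1,\beta_2)$ must be chosen so that the prefactor $|\mathbf{E}u_\eps|$ combines with $\|\gamma\mathbf{1} - T\mathbf{1}\|_{L^p}$ in a way compatible with $\inf_{\|u\|_{L^p}=1}\|(\gamma I - T)u\|_{L^p}$ and with the correct constants. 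It is this matching --- rather than any new functional-analytic input --- that I expect to be the crux of the argument.
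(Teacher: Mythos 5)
Your sketch has a genuine gap, and it is exactly at the point you flag as "delicate": the correction term cannot be related to the infimum in \eqref{gammag}, because the inequality runs the wrong way. Your construction produces a unit vector $u_\eps$ adapted to the extremizer of $\|I-\gamma\mathbf{E}\|$ and yields, after taking norms in your decomposition,
\begin{equation*}
\|I-T\| \;\ge\; \|(I-T)u_\eps\|_{L^p} \;\ge\; \|(I-\gamma\mathbf{E})u_\eps\|_{L^p} \;-\; \Bigl|\textstyle\int_0^1 u_\eps\,d\lambda\Bigr|\,\|(\gamma I-T)\mathbf{1}\|_{L^p}.
\end{equation*}
To conclude \eqref{gammag} you would need $\bigl|\int u_\eps\bigr|\,\|(\gamma I-T)\mathbf{1}\|_{L^p} \le \delta + O(\eps)$, where $\delta := \inf_{\|u\|_{L^p}=1}\|(\gamma I-T)u\|_{L^p}$. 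But $\|(\gamma I-T)\mathbf{1}\|_{L^p}$ only satisfies $\|(\gamma I-T)\mathbf{1}\|_{L^p} \ge \delta$; nothing forces it to be close to $\delta$, since the near-minimizer $u_0$ realizing $\delta$ need not resemble $\mathbf{1}$ at all, and your construction of $u_\eps$ never uses $u_0$. A concrete failure: take $p=1$, $\gamma=1$, and the rank-one (hence compact, hence narrow) operator $Tf = \bigl(\int_0^1 f r\,d\lambda\bigr) r$, where $r$ is the sign equal to $1$ on $[0,\frac12]$ and $-1$ on $(\frac12,1]$. Then $Tr=r$, so $\delta=0$, while $T\mathbf{1}=0$, so $\|(\gamma I - T)\mathbf{1}\|_{L^1}=1$; moreover the extremizer for $C_1=2$ is a tall spike with $\bigl|\int u_\eps\bigr| \approx 1$. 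Your bound then certifies only $\|I-T\|\ge C_1 - 1 = 1$, whereas the theorem demands $\|I-T\|\ge 2$ (which is in fact true for this $T$). So the loss is real, not a matter of bookkeeping: any correct argument must build the test configuration around the near-minimizer $u_0$, not around $\mathbf{1}$. Separately, your opening premise --- that the proof of Theorem~\ref{compT} in \cite{SS} uses compactness only through narrowness --- is unsupported; had that been so, the present paper (by the same authors) would have been unnecessary.

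The paper's proof avoids both problems by a reduction that keeps $u_0$ at the center. It approximates $u_0$ by a step function $h=\sum_k a_k\mathbb{I}_{A_k}$ with $\|(\gamma I-T)h\|_{L^p}\le\delta+2\eps$, and then uses two lemmas about narrow operators: multiplication by an $L^\infty$ function preserves narrowness (Lemma~\ref{sign}), and a narrow operator restricts to a \emph{compact} operator on some 1-complemented subspace isometrically isomorphic to $L^p$ and containing $\mathbf{1}$ (Lemma~\ref{one}). Applying the latter on each $A_k$ produces a 1-complemented subspace $X_0\cong L^p([0,1])$ that contains $h$ and on which $T$ is compact; Theorem~\ref{compT} is then invoked as a black box for the compressed operator $J^{-1}P_0T|_{X_0}J$, and the estimate transports back because $h\in X_0$ witnesses that the infimum over $X_0$ is at most $\delta+2\eps$. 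In other words, the new functional-analytic input is precisely the ``narrow $\Rightarrow$ compact on a rich 1-complemented copy of $L^p$'' lemma, which lets one cite the compact case rather than re-prove it; your proposal attempts the opposite route and founders on the step where the infimum must enter.
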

In the case $p = 1$, inequality \eqref{gamma1} (for all narrow operators) follows from a Daugavet-type result due to A.M. Plichko and M.M. Popov 
(\cite[\S9, Theorem 8]{PP90}, see also \cite[Corollary 6.4]{PR13}):
$$
\|I - T\| = 1 + \|T\| \quad\mbox{for every narrow operator}\quad T \in \mathcal{B}(L^1([0, 1])) .
$$
Indeed,
\begin{align*}
\|I - T\|_{L^1 \to L^1} + \inf_{\|u\|_{L^1} = 1} \|(I - T)u\|_{L^1} \ge \|I - T\|_{L^1 \to L^1} + 1 - \sup_{\|u\|_{L^1} = 1} \|Tu\|_{L^1}  \\
= 1 + \|T\| _{L^1 \to L^1} + 1 - \|T\|_{L^1 \to L^1}  = 2 = C_1 .
\end{align*}

\section{Proof of Theorem \ref{gammaT}}

It follows from the definition of a narrow operator that if $T \in \mathcal{B}(L^p(\mu))$ is narrow and $S \in \mathcal{B}(L^p(\mu))$, then
$ST \in \mathcal{B}(L^p(\mu))$ is narrow (see \cite[Proposition 1.8]{PR13}). On the other hand, there are $S, T \in \mathcal{B}(L^p(\mu))$ 
such that $T$ is narrow but $TS$ is not (see \cite[Proposition 5.1]{PR13}). The following lemma shows that the latter cannot happen 
if $S$ is a multiplication operator. 
\begin{lemma}\label{sign}
Let $X = L^p(\mu)$,\, $g \in L^\infty(\mu)$,\, and $T \in \mathcal{B}(X, Y)$ be a narrow operator. Then the operator $TgI \in \mathcal{B}(X, Y)$ is also narrow.
\end{lemma}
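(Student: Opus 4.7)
The plan is to use narrowness of $T$ piecewise, after approximating $g$ by a simple function in $L^\infty$-norm. Fix $A \in \Sigma^+$ and $\varepsilon > 0$; we need a mean zero sign $h$ on $A$ with $\|T(gh)\|_Y < \varepsilon$.

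First I would approximate $g$. Choose a $\Sigma$-simple function $\tilde g = \sum_{i=1}^n c_i \mathbb{I}_{A_i}$ with the $A_i$ pairwise disjoint and $\bigcup_i A_i = \Omega$, such that $\|g - \tilde g\|_{L^\infty(\mu)} < \delta$, where $\delta > 0$ is chosen so that $\|T\|\,\delta\,\mu(A)^{1/p} < \varepsilon/2$. Then for any sign $h$ on $A$ we have $\|h\|_{L^p(\mu)} = \mu(A)^{1/p}$, and hence
\begin{equation*}
\|T((g - \tilde g) h)\|_Y \le \|T\|\,\|g - \tilde g\|_{L^\infty(\mu)}\,\|h\|_{L^p(\mu)} < \varepsilon/2 .
\end{equation*}

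Next, I would build $h$ piecewise to control $T(\tilde g h)$. Let $I' := \{i : \mu(A \cap A_i) > 0\}$ and set $M := 1 + \sum_{i \in I'} |c_i|$. For each $i \in I'$ the set $A \cap A_i$ belongs to $\Sigma^+$, so by narrowness of $T$ there exists a mean zero sign $h_i$ on $A \cap A_i$ with $\|T h_i\|_Y < \varepsilon/(2 M)$. Define $h := \sum_{i \in I'} h_i$. Since the supports are disjoint subsets of $A$ whose union equals $A$ up to a $\mu$-null set, $h$ is a sign on $A$; and
\begin{equation*}
\int_\Omega h \, d\mu = \sum_{i \in I'} \int_\Omega h_i \, d\mu = 0 ,
\end{equation*}
so $h$ has mean zero. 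Moreover, because $h_i$ is supported in $A_i$,
\begin{equation*}
\tilde g h = \sum_{i \in I'} c_i h_i , \qquad \|T(\tilde g h)\|_Y \le \sum_{i \in I'} |c_i|\,\|T h_i\|_Y < \varepsilon/2 .
\end{equation*}

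Combining the two estimates gives $\|T(gh)\|_Y \le \|T(\tilde g h)\|_Y + \|T((g-\tilde g) h)\|_Y < \varepsilon$, which is exactly $\|(TgI)h\|_Y < \varepsilon$, so $TgI$ is narrow. I do not foresee a real obstacle here: the only small subtleties are making sure $\tilde g$ is defined on all of $\Omega$ (so that the pieces $A \cap A_i$ cover $A$, ensuring $h$ is a sign on $A$ rather than on some smaller set) and handling indices with $c_i = 0$ or $\mu(A \cap A_i) = 0$, both of which are absorbed by the definition of $M$ and $I'$.
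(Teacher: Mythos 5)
Your proposal is correct and follows essentially the same route as the paper's proof: approximate $g$ in $L^\infty$ by a simple function, apply narrowness of $T$ on each intersection $A \cap A_i$ of positive measure, and sum the resulting mean zero signs, splitting the estimate into the simple-function part and the $L^\infty$-error part. The only cosmetic differences are your use of $\mu(A)^{1/p}$ in place of $\mu(\Omega)^{1/p}$ and the ``$1+$'' in $M$, which absorbs the degenerate cases ($T=0$ or all $c_i=0$) that the paper handles by separate assumptions.
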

\begin{proof}
There is nothing to prove if $T = 0$. Suppose $T \not= 0$.
Take any $A \in \Sigma^+$ and any $\varepsilon > 0$.
There exists a simple function
$g_0 =\sum_{k = 1}^M a_k \mathbb{I}_{B_k} \not\equiv 0$ such that
$$
\|g - g_0\|_{L^\infty} < \frac{\varepsilon}{2 \|T\| (\mu(\Omega))^{1/p}}\, .
$$
Here  $M \in \mathbb{N}$,\, $B_k \in \Sigma$, $k = 1, \dots, M$
are pairwise disjoint, $\mu(B_k) > 0$, $\cup_{k = 1}^M B_k = \Omega$,\, $a_k \in \mathbb{C}$.

Let $A_k := A\cap B_k$. If $\mu(A_k) > 0$, let $h_k$ be a mean zero sign on $A_k$ such that
$$
\|Th_k\| < \frac{\varepsilon}{2\sum_{k = 1}^M |a_k|}\, . 
$$ 
Let 
$$
h := \sum_{\{k : \ \mu(A_k) > 0\}} h_k .
$$
It is clear that $h$ is a mean zero sign on $A$ and
\begin{align*}
\|TgI h\| & \le \|Tg_0 h\| +  \|T(g - g_0) h\| \\
& \le  \left\|T\sum_{\{k : \ \mu(A_k) > 0\}} a_k h_k\right\|+ \|T\| \|g - g_0\|_{L^\infty} \|h\|_{L^p} \\
& < \sum_{\{k : \ \mu(A_k) > 0\}} |a_k| \|Th_k\| + \|T\|\, \frac{\varepsilon}{2 \|T\| (\mu(\Omega))^{1/p}}\, (\mu(\Omega))^{1/p} \\
&\le \frac{\varepsilon}{2\sum_{k = 1}^M |a_k|} \sum_{\{k : \ \mu(A_k) > 0\}} |a_k| + \frac{\varepsilon}{2} \le 
\frac{\varepsilon}{2}  + \frac{\varepsilon}{2}  = \varepsilon .
\end{align*}
\end{proof}
The above lemma and its proof remain valid if $X$ is a K\"othe F-space on $(\Omega, \Sigma, \mu)$ (see \cite[Section 1.3]{PR13}).
Similarly, the following lemma and its proof remain valid if $X$ is a rearrangement-invariant Banach space on $([0, 1], \mathcal{L}, \lambda)$
with absolutely continuous norm.
This lemma is a minor modification of \cite[Theorem 2.21]{PR13} and \cite[\S 8, Proposition 5]{PP90}.
\begin{lemma}\label{one}
Let $X = L^p([0, 1])$  and $T \in \mathcal{B}(X, Y)$ be a narrow operator.
Then there exists a 1-complemented subspace $X_0$ of $X$ isometrically isomorphic to $X$ such that $\mathbf{1}  \in X_0$ and the restriction $T|_{X_0}$ of $T$
to $X_0$ is a compact operator. 
\end{lemma}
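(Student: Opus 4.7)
I will build a dyadic sub-$\sigma$-algebra $\mathcal{G} \subset \mathcal{L}$ along a binary tree whose splits are dictated by mean-zero signs on which $T$ is very small (available by narrowness). Setting $X_0 := L^p([0,1], \mathcal{G}, \lambda)$ will provide the structural requirements via the conditional-expectation projection, while the smallness built into the construction will force $T|_{X_0}$ to be a norm-limit of finite-rank operators.

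\textbf{Construction.} Fix positive tolerances $(\delta_n)_{n \ge 0}$ with $\sum_n 2^{n+1} \delta_n < \infty$, for instance $\delta_n = 4^{-n}$. Inductively build finite $\sigma$-algebras $\mathcal{G}_0 \subset \mathcal{G}_1 \subset \cdots$, where $\mathcal{G}_n$ is generated by $2^n$ sets $A_{n,1}, \dots, A_{n,2^n}$ of equal measure $2^{-n}$ partitioning $[0,1]$. Start with $\mathcal{G}_0 = \{\emptyset, [0,1]\}$. Given $\mathcal{G}_n$, narrowness of $T$ yields, for each atom $A_{n,k}$, a mean-zero sign $g_{n,k}$ on $A_{n,k}$ with $\|T g_{n,k}\| < \delta_n$; split $A_{n,k}$ into $A_{n+1, 2k-1} := \{g_{n,k} = 1\}$ and $A_{n+1, 2k} := \{g_{n,k} = -1\}$, each of measure $2^{-(n+1)}$ because $g_{n,k}$ has mean zero. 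Set $\mathcal{G} := \sigma\bigl(\bigcup_n \mathcal{G}_n\bigr)$ and $X_0 := L^p([0, 1], \mathcal{G}, \lambda)$, embedded as a closed subspace of $L^p([0,1])$.

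\textbf{Structural properties.} Clearly $\mathbf{1} \in X_0$. The conditional expectation $\mathbf{E}^\mathcal{G}$ is a norm-one projection onto $X_0$, so $X_0$ is 1-complemented. The restriction $\lambda|_\mathcal{G}$ is non-atomic (any $\mathcal{G}$-atom lies in some $\mathcal{G}_n$-atom, hence has measure $\le 2^{-n}$ for every $n$), so by the standard measure-isomorphism theorem for separable non-atomic probability spaces $X_0$ is isometrically isomorphic to $L^p([0,1])$.

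\textbf{Compactness of $T|_{X_0}$.} Let $E_n := \mathbf{E}(\cdot | \mathcal{G}_n)$, a finite-rank norm-one projection, so $T E_n$ is compact. I claim $\|T|_{X_0} - T E_n|_{X_0}\| \to 0$, which makes $T|_{X_0}$ a norm-limit of compact operators. For $f \in X_0$, $L^p$-martingale convergence gives
\[
f - E_n f = \sum_{m \ge n}(E_{m+1} - E_m) f .
\]
Each $(E_{m+1} - E_m) f$ is constant with zero mean on the two halves of every atom of $\mathcal{G}_m$, so it equals $\sum_k \beta_{m,k}\, g_{m,k}$ for scalars $\beta_{m,k}$. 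Since the $g_{m,k}$ have pairwise disjoint supports of measure $2^{-m}$,
\[
\|(E_{m+1} - E_m) f\|_{L^p}^p = 2^{-m} \sum_k |\beta_{m,k}|^p,
\]
and H\"older's inequality over the $2^m$ indices gives $\sum_k |\beta_{m,k}| \le 2^m \|(E_{m+1} - E_m) f\|_{L^p} \le 2^{m+1} \|f\|_{L^p}$. Hence
\[
\|T(f - E_n f)\| \le \sum_{m \ge n} \sum_k |\beta_{m,k}|\, \|T g_{m,k}\| \le \|f\|_{L^p} \sum_{m \ge n} 2^{m+1} \delta_m \longrightarrow 0 .
\]
The main delicacy is choosing the tolerances $\delta_n$ small enough \emph{in advance} to beat the doubling $2^n$ coming from the partition size; once this is arranged, the martingale-difference bookkeeping above delivers the compactness with no further effort.
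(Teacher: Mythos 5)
Your proof is correct, and it takes a genuinely different, more self-contained route than the paper. The paper's own proof is a reduction: it fixes a mean-zero sign $\overline{g}_1$ on $[0,1]$, observes via Lemma~\ref{sign} that $T_1 := T\overline{g}_1 I$ is narrow, invokes the proof of Theorem 2.21 of \cite{PR13} to obtain a 1-complemented copy $X_1$ of $X$ with $\overline{g}_1 \in X_1$ and $T_1|_{X_1}$ compact, and then conjugates by the sign, setting $X_0 := \overline{g}_1 X_1$, so that $\mathbf{1} = \overline{g}_1^2 \in X_0$ and $T|_{X_0} = T_1|_{X_1}\, \overline{g}_1 I|_{X_0}$ is compact. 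That conjugation trick is needed only because the construction cited from \cite{PR13} roots its Haar-type system at a mean-zero sign rather than at $\mathbf{1}$. You instead rebuild the engine of that cited result directly, rooted at $\mathbf{1}$ from the start: a dyadic filtration whose splits come from mean-zero signs with $\|Tg_{n,k}\| < \delta_n$ and $\sum_n 2^{n+1}\delta_n < \infty$; then $\mathbf{E}^{\mathcal{G}}$ gives 1-complementedness, non-atomicity plus countable generation of $\mathcal{G}$ gives the isometry of $L^p([0,1],\mathcal{G},\lambda)$ with $L^p([0,1])$, and the martingale-difference estimate gives compactness. This works precisely because the lemma asks only for compactness of $T|_{X_0}$, with no control on its norm, so nothing prevents $\mathbf{1}$ (on which $T$ may be large) from serving as the root of the tree. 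Your quantitative bookkeeping is sound: the equal splitting of atoms follows from the mean-zero property; $\|(E_{m+1}-E_m)f\|_{L^p} \le 2\|f\|_{L^p}$ and H\"older give $\sum_k |\beta_{m,k}| \le 2^{m+1}\|f\|_{L^p}$; and with $\delta_m = 4^{-m}$ the tails $\sum_{m \ge n} 2^{m+1}\delta_m$ vanish, so $T|_{X_0}$ is a norm limit of the finite-rank operators $TE_n|_{X_0}$. What your route buys is independence from Lemma~\ref{sign} and from the external citation, at the cost of length; what the paper's route buys is brevity and reuse of the literature. Two points in your write-up deserve slightly more care, though neither is a gap: an atom of $\lambda|_{\mathcal{G}}$ is contained mod null in a single atom of each $\mathcal{G}_n$ (it cannot split between two of them), which is the precise form of your non-atomicity claim; and the isometric isomorphism $L^p([0,1],\mathcal{G},\lambda) \cong L^p([0,1])$ rests on Carath\'eodory's theorem that a separable non-atomic probability measure algebra is isomorphic to that of the unit interval, measure-algebra isomorphisms inducing isometries of the corresponding $L^p$ spaces.
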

\begin{proof}
Take a mean zero sign $\overline{g}_1$ on $[0, 1]$ and set $T_1 := T\overline{g}_1 I$. The operator $T_1$ is narrow according to Lemma \ref{sign}.
The proof of \cite[Theorem 2.21]{PR13} (with $T_1$ in place of $T$ and with $\varepsilon \ge 2\|T_1 \overline{g}_1\|,\, \varepsilon_1 > \|T_1 \overline{g}_1\|$)
shows that there exists a 1-complemented (see the proof of \cite[\S 8, Proposition 5]{PP90}) 
subspace $X_1$ of $X$ isometrically isomorphic to $X$ such that $\overline{g}_1  \in X_1$ and the restriction $T_1|_{X_1}$ of $T_1$
to $X_1$ is a compact operator. Let $X_0 := \overline{g}_1 X_1$. Since $\overline{g}_1^2 = \mathbf{1}$, the operator of multiplication $\overline{g}_1 I$ is an 
isometric isomorphism of $X_0$ onto $X_1$ and of $X$ onto itself. Let $P_1 \in \mathcal{B}(X)$ be a projection onto $X_1$ such that $\|P_1\| = 1$.
Then $P_0 := \overline{g}_1 P_1 \overline{g}_1 I \in \mathcal{B}(X)$ is a projection onto $X_0$ such that $\|P_0\| = 1$. Hence $X_0$ is 1-complemented
(this follows also from  \cite[Theorem 4]{A66}, since $X_0$ is isometrically isomorphic to $X = L^p([0, 1])$).
Further, $\mathbf{1} = \overline{g}_1^2 \in \overline{g}_1 X_1 = X_0$ and
$T|_{X_0} = T_1|_{X_1}\, \overline{g}_1 I|_{X_0}$ is compact.
\end{proof}

\begin{proof}[Proof of Theorem \ref{gammaT}]
Take an arbitrary $\varepsilon > 0$. Let
\begin{equation}\label{delta}
\delta := \inf_{\|u\|_{L^p} = 1} \|(\gamma I - T)u\|_{L^p} .
\end{equation}
There exists $u_0  \in L^p([0, 1])$ such that $\|u_0\|_{L^p} = 1$ and $\|(\gamma I - T)u_0\|_{L^p} < \delta + \epsilon$. Then
there exists an approximation $h :=\sum_{k = 1}^M a_k \mathbb{I}_{A_k}$ of $u_0$ such that $A_k$, $k = 1, \dots, M$, $M \in \mathbb{N}$
are pairwise disjoint Borel sets of positive measure, $\cup_{k = 1}^M A_k = [0, 1]$, $a_k \in \mathbb{C}$, and
\begin{equation}\label{deltaeps}
\left\|\gamma h - Th\right\|_{L^p([0, 1])} \le \delta + 2\varepsilon , \quad \|h\|_{L^p([0, 1])} = 1 .
\end{equation}
Partition $[0, 1]$ into subintervals $I_k$ of length $\lambda(A_k)$, $k = 1, \dots, M$.
Since $(A_k, \mathcal{L}, \lambda)$
is isomorphic (modulo sets of measure $0$) to $(I_k, \mathcal{L}, \lambda)$ (see, e.g., \cite[Theorem 9.2.2 and Corollary 6.6.7]{B07}),
one can easily derive from Lemma \ref{one} the existence, for each $k$, of a 1-complemented subspace $X_k$ of $L^p(A_k)$ isometrically isomorphic to $L^p(I_k)$ 
such that $\mathbb{I}_{A_k}  \in X_k$ and $T|_{X_k}$ is a compact operator. 
Let
$$
X_0 := \left\{f \in L^p([0, 1]) : \ f|_{A_k} \in X_k , \, k = 1, \dots, M \right\}.
$$
It is easy to see that $X_0$ is 1-complemented and isometrically isomorphic to $L^p([0, 1])$, and that $T_0 := T|_{X_0}$ is a compact operator. 
Let $J : L^p([0, 1]) \to X_0$ be an isometric isomorphism and $P_0 \in \mathcal{B}(L^p([0, 1]))$ be a projection onto $X_0$ such that $\|P_0\| = 1$.
Then $T_1 := J^{-1}P_0 T_0 J \in \mathcal{K}(L^p([0, 1]))$ and it follows from Theorem \ref{compT} that
\begin{align*}
& \|I - T_0\|_{X_0 \to L^p} + \inf_{f \in X_0, \, \|f\|_{L^p} = 1} \|(\gamma I - T_0)f\|_{L^p} \\
& \ge \|P_0(I - T_0)\|_{X_0 \to X_0} + \inf_{f \in X_0, \, \|f\|_{L^p} = 1} \|P_0(\gamma I - T_0)f\|_{L^p} \\
& = \|J^{-1}P_0(I - T_0)J\|_{L^p \to L^p} + \inf_{\varphi \in L^p, \, \|\varphi\|_{L^p} = 1} \|J^{-1}P_0(\gamma I - T_0)J\varphi\|_{L^p} \\
& = \|I - T_1\|_{L^p \to L^p} + \inf_{\varphi \in L^p, \, \|\varphi\|_{L^p} = 1} \|(\gamma I - T_1)\varphi\|_{L^p} 
\ge \|I -  \gamma\mathbf{ E}\|_{L^p \to L^p} .
\end{align*}
Since $h \in X_0$, it follows from \eqref{deltaeps} that
$$
\delta + 2\varepsilon \ge \inf_{f \in X_0, \, \|f\|_{L^p} = 1} \|(\gamma I - T_0)f\|_{L^p} .
$$
Hence
\begin{align*}
\label{}
\|I - T\|_{L^p \to L^p} + \delta + 2\varepsilon & \ge \|I - T_0\|_{X_0 \to L^p} + \inf_{f \in X_0, \, \|f\|_{L^p} = 1} \|(\gamma I - T_0)f\|_{L^p} \\
&\ge \|I -  \gamma\mathbf{ E}\|_{L^p \to L^p} 
\end{align*}
and
$$
\|I - T\|_{L^p \to L^p} + \inf_{\|u\|_{L^p} = 1} \|(\gamma I - T)u\|_{L^p} + 2\varepsilon \ge \|I -  \gamma\mathbf{ E}\|_{L^p \to L^p} 
$$
for all $\varepsilon > 0$ (see \eqref{delta}).
\end{proof}

\end{document}